\title{Fractal Countability as a Constructive Alternative to the Power Set of \( \mathbb{N} \): A Meta-Formal Approach to Stratified Definability}
\author{Stanislav Semenov \\
\href{mailto:stas.semenov@gmail.com}{stas.semenov@gmail.com} \\
\href{https://orcid.org/0000-0002-5891-8119}{ORCID: 0000-0002-5891-8119}}
\date{March 27, 2025}
\theoremstyle{definition}
\newtheorem{definition}{Definition}[section]
\newtheorem*{notation}{Notation}
\theoremstyle{plain}
\newtheorem{theorem}[definition]{Theorem}
\theoremstyle{remark}
\begin{document}

\maketitle

\begin{abstract}
    Classical set theory constructs the continuum via the power set \( \mathcal{P}(\mathbb{N}) \), thereby postulating an uncountable totality. However, constructive and computability-based approaches reveal that no formal system with countable syntax can generate all subsets of \( \mathbb{N} \), nor can it capture the real line in full. In this paper, we propose \emph{fractal countability}\footnote{The concept of \emph{fractal countability} was first introduced in the context of formal limitations on constructive definability in \cite{Semenov2025FractalBoundaries}, Chapter 6. The present work develops this idea into a standalone framework, aiming to reinterpret the classical power set \( \mathcal{P}(\mathbb{N}) \) through a layered constructive lens. While the original formulation served as a meta-theoretical insight into the limits of formal systems, this article treats fractal countability as a positive and structurally grounded alternative to uncountable set constructions.} as a constructive alternative to the power set. Rather than treating countability as an absolute cardinal notion, we redefine it as a stratified, process-relative closure over definable subsets, generated by a sequence of conservative extensions to a base formal system. This yields a structured, internally growing hierarchy of constructive definability that remains within the countable realm but approximates the expressive richness of the continuum. We compare fractally countable sets to classical countability and the hyperarithmetical hierarchy, and interpret the continuum not as a completed object, but as a layered definitional horizon. This framework provides a constructive reinterpretation of power set-like operations without invoking non-effective principles.
\end{abstract}

\subsection*{Mathematics Subject Classification}
03D80 (Computability and recursion), 03E10 (Ordinal and cardinal numbers), 03B70 (Logic in computer science)

\subsection*{ACM Classification}
F.4.1 Mathematical Logic, F.1.1 Models of Computation

\section*{Introduction}

The classical conception of uncountability, grounded in set-theoretic constructions such as the power set of the natural numbers \( \mathcal{P}(\mathbb{N}) \), has long stood as a cornerstone of modern mathematics. From this foundation arises the real number continuum \( \mathbb{R} \), whose cardinality is strictly greater than that of \( \mathbb{N} \) and which serves as the basis for real analysis, measure theory, and topology. However, such constructions presuppose ontological commitments---particularly, the existence of completed infinite totalities---that lie beyond what can be accessed by constructive or algorithmic means.

Constructive mathematics, in contrast, restricts itself to those mathematical objects and processes that can be explicitly defined, algorithmically generated, or at least finitely represented within formal systems. From this perspective, the full power set \( \mathcal{P}(\mathbb{N}) \) is not just unreachable---it is inexpressible in any system that limits itself to effective operations over syntactic representations. Every definable subset of \( \mathbb{N} \) must be represented in some finite, symbolic way; and since there are only countably many such representations, only countably many subsets can be constructively specified.

This perspective builds upon prior work \cite{Semenov2025FractalBoundaries}, where the concept of the \emph{fractal boundary of constructivity} was introduced as a structural limit to formal definability. This tension lies at the heart of the present work. While classical logic embraces uncountability as a fundamental feature of the mathematical universe, constructive logic reveals that any attempt to approach uncountable sets through finite definitional processes results in layered, but ultimately countable, fragments. No single constructive method---nor even an infinite sequence of definitional extensions---can exhaust the totality of \( \mathcal{P}(\mathbb{N}) \), let alone fully instantiate the continuum.

In this article, we propose an alternative framework: \emph{fractal countability}---a stratified and process-relative model of constructive definability. Rather than treating countability as a static, absolute property (i.e., cardinal equivalence to \( \mathbb{N} \)), we develop a notion of countability indexed by syntactic growth. Specifically, we define fractally countable sets as unions of definable fragments \( S_n \), each generated by a conservative extension \( \mathcal{F}_n \) of a base formal system \( \mathcal{F}_0 \). This creates a structured, indefinitely extensible hierarchy of constructive definability---one that remains within the countable domain but allows for increasingly complex subsets of \( \mathbb{N} \) to be incorporated over time.

Our aim is twofold. First, we seek to reframe the continuum problem constructively---not by denying the existence of \( \mathbb{R} \), but by providing a formal account of what parts of it can be accessed within bounded, syntax-governed processes. Second, we offer fractal countability as a constructive analogue to the power set: a layered closure over definable subsets of \( \mathbb{N} \), which respects the finitary limitations of formal systems while capturing a rich and growing internal universe.

The structure of the article is as follows. Section 1 reviews the classical use of the power set in defining the real numbers and highlights the constructive barriers to generating all subsets of \( \mathbb{N} \). Section 2 introduces the formal framework of fractal countability. Section 3 compares this framework to classical notions of countability and to the hyperarithmetical hierarchy. Section 4 examines the implications of fractal countability for the constructive interpretation of the continuum. Section 5 discusses philosophical consequences, and Section 6 outlines directions for future research.

\section{Classical Power Set and Constructive Limitations}

The classical construction of the continuum relies on the identification of real numbers with infinite binary sequences, which in turn are often treated as characteristic functions of subsets of the natural numbers \( \mathbb{N} \). Under this perspective, the real line \( \mathbb{R} \) is defined via the power set \( \mathcal{P}(\mathbb{N}) \), typically by interpreting each subset as the binary expansion of a real number in the unit interval. This association underlies the cardinality equivalence \( |\mathbb{R}| = 2^{\aleph_0} = |\mathcal{P}(\mathbb{N})| \), a fundamental identity in classical set theory.

However, this construction depends crucially on non-constructive principles. The power set axiom asserts the existence of all subsets of any given set, regardless of their definability, computability, or representability. Consequently, \( \mathcal{P}(\mathbb{N}) \) includes vastly many subsets that are not individually specifiable within any formal system with countable syntax.

From a constructive standpoint, this poses a serious challenge. Formal systems such as Peano Arithmetic, second-order arithmetic, or type theories are inherently syntactic and finitary: they operate with countable alphabets, recursively enumerable rules, and finite derivations. Therefore, any object definable within such a system must be representable as a finite symbolic expression. Since the total number of such expressions is countable, only countably many subsets of \( \mathbb{N} \) can be defined or constructed in this way.

This limitation aligns with a well-known consequence of the Church–Turing thesis: any effectively generated function or set must be computable, and the set of computable functions is itself countable. Similarly, Gödel's incompleteness theorems demonstrate that even in powerful formal systems, there exist true statements (and implicitly, objects) that are not derivable within the system \cite{Godel1931}.

In the framework of reverse mathematics \cite{Simpson2009}, the situation becomes even clearer. The base system \( \mathrm{RCA}_0 \), which captures computable mathematics, can define only recursive subsets of \( \mathbb{N} \). Stronger systems such as \( \mathrm{ACA}_0 \) and \( \mathrm{ATR}_0 \) allow the definition of more complex sets (e.g., arithmetical or hyperarithmetical), but each of these still generates only a countable collection of subsets, bounded by the expressive power of the corresponding comprehension axioms.

In short, classical set theory assumes the totality of \( \mathcal{P}(\mathbb{N}) \), while constructive logic reveals that only a stratified fragment of this set can be reached via definable or computable means. This disjunction between assumed totality and definable fragment leads us to search for an alternative concept — one that acknowledges the layered and syntax-bound nature of constructive methods, while still capturing the richness of an internally expanding universe of subsets. The next section formalizes this idea as \emph{fractal countability}.

\section{Formal Framework of Fractal Countability}

To constructively approximate the expressive richness of \( \mathcal{P}(\mathbb{N}) \) without invoking full power set or comprehension axioms, we introduce a stratified framework of definability grounded in formal systems. The goal is to model the growth of constructively accessible subsets of \( \mathbb{N} \) as a layered and conservative expansion of syntax, rather than as an assumed totality.

\subsection{Base Systems and Conservative Extensions}

Let \( \mathcal{F}_0 \) denote a base constructive formal system, such as \( \mathrm{RCA}_0 \) from reverse mathematics or a minimal fragment of intuitionistic type theory. This system defines a set \( S_0 \subseteq \mathcal{P}(\mathbb{N}) \) consisting of all subsets of \( \mathbb{N} \) that are expressible or computable within \( \mathcal{F}_0 \).

We then consider a sequence of formal systems
\[
\mathcal{F}_0 \subseteq \mathcal{F}_1 \subseteq \mathcal{F}_2 \subseteq \cdots
\]
where each \( \mathcal{F}_{n+1} \) is a conservative syntactic extension of \( \mathcal{F}_n \), adding new definitional tools (e.g., comprehension over more complex formulas, bar recursion, or restricted forms of choice) without altering the provability of arithmetical statements in \( \mathcal{F}_n \). Each \( \mathcal{F}_n \) defines a set of subsets \( S_n \subseteq \mathcal{P}(\mathbb{N}) \), with \( S_n \subseteq S_{n+1} \).

\subsection{Fractally Countable Sets}

This layered definitional expansion motivates the following definition:

\begin{definition}[Fractally Countable Set]
Let \( \mathcal{F}_0 \) be a base formal system, and let \( \{ \mathcal{F}_n \}_{n \in \mathbb{N}} \) be a sequence of conservative syntactic extensions. A set \( S \subseteq \mathcal{P}(\mathbb{N}) \) is said to be \emph{fractally countable} (relative to \( \mathcal{F}_0 \)) if there exists a sequence of sets \( \{ S_n \} \) such that:
\begin{enumerate}
    \item \( S_0 \subseteq S_1 \subseteq \cdots \subseteq S_n \subseteq \cdots \),
    \item Each \( S_n \) consists of subsets definable within \( \mathcal{F}_n \),
    \item \( S = \bigcup_{n=0}^\infty S_n \),
    \item For every finite \( n \), \( S \nsubseteq \mathrm{Def}(\mathcal{F}_n) \), i.e., \( S \) is not definable within any single finite-stage system,
    \item Each \( \mathcal{F}_n \) remains constructive and avoids full second-order or impredicative comprehension.
\end{enumerate}
\end{definition}

\begin{notation}
We write \( S^\infty := \bigcup_{n=0}^\infty S_n \) to denote the full fractally countable closure relative to the base system \( \mathcal{F}_0 \).
\end{notation}

This definition captures the idea of a set that is fully countable in the classical sense, but whose internal structure reflects an infinite layering of definitional stages. Each fragment \( S_n \) is generated within a formal system with explicitly described rules; the set \( S^\infty \) is their union — an object that remains countable but exhibits transfinite-like complexity within a purely syntactic regime.

\subsection{Closure and Relativity}

The notion of fractal countability is inherently relative to the base system \( \mathcal{F}_0 \). A set that is fractally countable over one system may be fully definable in a stronger system, and vice versa. This relativity reflects the process-dependent nature of constructive definability and allows us to model the gradual enrichment of the expressive universe in a structured way.

Importantly, no uniform enumeration procedure for \( S^\infty \) can exist within any finite stage \( \mathcal{F}_n \), since this would collapse the hierarchy and violate the stepwise nature of the construction. This prevents paradoxes such as Richard’s paradox and ensures that each definitional jump adds genuinely new content not available at previous levels.

The next section will compare fractally countable sets to classical notions such as the set of all countable subsets, the hyperarithmetical hierarchy, and \( \Delta^1_1 \) definability.

\section{Comparison with Classical Countability and Hyperarithmetical Sets}

Fractal countability, as introduced in the previous section, offers a layered model of definability that remains within the bounds of classical countability but extends far beyond basic computability. To understand its reach and limitations, we compare it to standard notions such as classical countability, the hyperarithmetical hierarchy, and \( \Delta^1_1 \) definability. We also explore how different choices of the base system \( \mathcal{F}_0 \) affect the resulting structure, and consider potential applications to reverse mathematics and proof theory.

\subsection{\texorpdfstring{Comparison with Classical Countability}{Comparison with Classical Countability}}

In classical set theory, a set is countable if there exists a bijection with \( \mathbb{N} \), regardless of whether such a bijection is computable or even definable. This notion admits the existence of countable sets whose elements are not describable by any effective means.

In contrast, fractally countable sets are not merely countable in cardinality — they are countable \emph{by construction}. Each element arises from a definitional process tied to a specific formal system. The set \( S^\infty \) represents the closure under all such definitional layers relative to the base system \( \mathcal{F}_0 \). Thus, fractal countability refines the classical notion by imposing a strict syntactic structure and generation history on the elements of the set.

A critical consequence of this constraint is that fractal countability \emph{excludes classically countable sets that lack constructive descriptions}. Sets whose elements are postulated via non-effective or impredicative means — such as certain subsets of \( \mathbb{N} \) that arise from the axiom of choice — have no place in this framework. Fractal countability, therefore, not only reproduces a controlled fragment of classical countability, but also acts as a filter against so-called "pathological" constructions.

\subsection{\texorpdfstring{Relation to Hyperarithmetical and \( \Delta^1_1 \) Hierarchies}{Relation to Hyperarithmetical and Delta-1-1 Hierarchies}}

The hyperarithmetical hierarchy (denoted \( HYP \)) consists of all subsets of \( \mathbb{N} \) that are computable relative to some finite iteration of the Turing jump on the empty set \cite{Odifreddi1989,Soare2016}. These correspond to sets definable in second-order arithmetic with restricted \( \Pi^1_1 \)-comprehension and form a natural boundary for many semi-constructive systems.

Let us consider the sequence \( S_n = \mathbb{N} \cap \Delta^0_n \), where each \( S_n \) consists of sets recursive in \( \emptyset^{(n)} \), the \( n \)-th Turing jump of the empty set. Then
\[
S^\infty = \bigcup_{n=0}^\infty S_n = HYP.
\]
This shows that fractal countability can reconstruct \( HYP \) under a suitable sequence of extensions \( \mathcal{F}_n \), where each \( \mathcal{F}_n \) defines the sets computable relative to \( \emptyset^{(n)} \).

However, fractal countability is not intrinsically limited to \( HYP \). Its construction does not rely on external notions such as oracle computation or ordinal notations, but proceeds through syntactic extensions of formal systems. Therefore, if the systems \( \mathcal{F}_n \) incorporate stronger constructive principles — such as bar recursion, bounded choice, or inductive definitions — the resulting set \( S^\infty \) may extend beyond the classical hyperarithmetical hierarchy, while remaining within a constructively interpretable regime.

\subsection{\texorpdfstring{Dependence on the Base System \( \mathcal{F}_0 \)}{Dependence on the Base System F₀}}

A central feature of fractal countability is its relativity to the base system \( \mathcal{F}_0 \). Different choices of \( \mathcal{F}_0 \) may lead to non-equivalent or even incomparable hierarchies of definable sets. For example, choosing \( \mathrm{RCA}_0 \) yields a hierarchy grounded in computable mathematics, while starting from a dependent type-theoretic base may yield hierarchies defined by canonical terms and constructive identity types.

This raises an important issue: \emph{how should one choose the base system}? The answer depends on the epistemological context and the application domain. If one aims to study the limits of algorithmic definability, \( \mathrm{RCA}_0 \) or similar systems are natural choices. If one is interested in internal reasoning in constructive proof assistants, a type-theoretic foundation may be more appropriate.

Crucially, hierarchies based on different \( \mathcal{F}_0 \) may define distinct families of sets that are not mutually embeddable. For instance, certain sets definable in a type-theoretic hierarchy may not be definable in any arithmetic fragment, and vice versa. This suggests the need for a general theory of \emph{hierarchy comparison}, capable of measuring and relating definitional power across foundational systems.

\subsection{Potential Applications and Open Directions}

Fractal countability may serve as a useful tool in both reverse mathematics and proof theory. In reverse mathematics, one could analyze which comprehension axioms are needed to reach a given stage \( S_n \), or to reconstruct the full closure \( S^\infty \) from a base \( \mathcal{F}_0 \) \cite{FriedmanSimpson1978}. This could lead to a new spectrum of subsystems, defined not just by logical axioms, but by their position in a definitional hierarchy.

In proof theory, the syntactic layering of \( \mathcal{F}_n \) offers a constructive analogue to ordinal analysis. Instead of indexing systems by ordinals, one tracks their enrichment through definitional closure. This approach may clarify the growth of provable total functions, and help calibrate the expressive strength of formal systems in a finer-grained, syntactic manner.

Practically, one could also attempt to verify that particular sets, such as those arising in effective versions of classical theorems, belong to a specific \( S_n \) — thereby giving \emph{constructive witnesses} to their definability. For example, proving that the set of solutions to an arithmetically parameterized compactness condition lies in \( S_2 \) would tie a classical result to a concrete constructive stage.

\paragraph{Open Questions.} Several questions remain open:
\begin{itemize}
    \item How does \( S^\infty \) compare precisely to \( HYP \), \( \Delta^1_1 \), and the analytic hierarchy? Are there cases where fractal countability is strictly stronger or weaker?
    \item What principles (e.g., bar recursion, restricted choice, definitional reflection) are admissible in \( \mathcal{F}_n \) without compromising constructivity?
    \item Are there ``natural'' sequences of extensions \( \mathcal{F}_n \) corresponding to common hierarchies in logic, such as those in reverse mathematics or proof assistants?
    \item Can we develop a general framework for comparing definability hierarchies that originate from distinct base systems?
\end{itemize}

These questions point to a broader program: understanding definability not as a static global property, but as a layered and system-relative process — one that can be analyzed, compared, and applied across both constructive and classical contexts.

\section{The Continuum as a Constructive Horizon}

In classical set theory, the continuum is defined via the power set \( \mathcal{P}(\mathbb{N}) \), postulating the existence of all subsets of natural numbers. This construction yields a set of cardinality \( 2^{\aleph_0} \), forming the basis for real analysis and classical models of computation over the reals.

However, from a constructive perspective, such completeness is inaccessible. As shown in the previous sections, no countable formal system can generate an uncountable collection of definable sets — even when extended iteratively through syntactic layers. This leads us to reinterpret the continuum not as a completed object, but as a process-relative \emph{horizon} of definability: a boundary that can be approached by increasingly enriched systems, but never fully reached.

\subsection{Formal Inaccessibility and the Role of Fractal Countability}

Let \( \mathcal{F}_0 \) be a countable base formal system, and let \( S^\infty \) be the fractally countable union over all definable stages \( S_n \) generated by extensions \( \mathcal{F}_n \). Then:

\begin{theorem}[Fractal Inaccessibility of the Continuum]
Let \( \mathcal{F}_0 \) be any countable formal system, and \( \{ \mathcal{F}_n \} \) a sequence of constructive conservative extensions. Then the fractally countable closure \( S^\infty \subseteq \mathcal{P}(\mathbb{N}) \) remains countable, and cannot coincide with the full power set \( \mathcal{P}(\mathbb{N}) \) unless non-constructive comprehension principles are introduced.
\end{theorem}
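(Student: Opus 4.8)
The plan is to prove this theorem in two parts, matching its two claims: countability of $S^\infty$, and the strict inequality $S^\infty \neq \mathcal{P}(\mathbb{N})$.

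For the first claim, I would argue by a standard diagonal-free counting argument. Each formal system $\mathcal{F}_n$ has a countable alphabet and recursively enumerable rules, so the set of finite well-formed expressions (hence definitions of subsets) over $\mathcal{F}_n$ is countable; call this $\mathrm{Def}(\mathcal{F}_n)$. By construction $S_n \subseteq \mathrm{Def}(\mathcal{F}_n)$, so each $S_n$ is countable. Then $S^\infty = \bigcup_{n=0}^\infty S_n$ is a countable union of countable sets, hence countable. The only delicate point here is whether invoking a countable union of countable sets is legitimate in the intended constructive metatheory; I would note that since the sequence $\{\mathcal{F}_n\}$ and the enumerations of their syntax are given effectively, a single effective enumeration of $S^\infty$ can be assembled by a diagonal (dovetailing) pairing $\langle n, k \rangle \mapsto$ the $k$-th definition in $\mathcal{F}_n$, so no nonconstructive choice of enumerations is needed.

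For the second claim, I would apply Cantor's theorem in its constructive form: for any surjection-candidate enumeration $e \colon \mathbb{N} \to S^\infty$, the diagonal set $D = \{\, m \in \mathbb{N} : m \notin e(m)\,\}$ differs from every $e(m)$, so $D \notin S^\infty$. Since $S^\infty$ is countable (by part one), it admits such an enumeration, whence $D \in \mathcal{P}(\mathbb{N}) \setminus S^\infty$ witnesses $S^\infty \subsetneq \mathcal{P}(\mathbb{N})$. This $D$ is precisely a subset that is not captured at any finite stage $\mathcal{F}_n$, consistent with condition (4) of the definition. To capture $D$ — and generally to close the gap up to all of $\mathcal{P}(\mathbb{N})$ — one would need a comprehension principle quantifying over the completed totality $S^\infty$ itself, i.e.\ impredicative or full second-order comprehension, which the hypothesis (condition (5), each $\mathcal{F}_n$ constructive) explicitly forbids.

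The main obstacle I anticipate is not the counting or the diagonalization, both of which are routine, but rather making the final clause (``unless non-constructive comprehension principles are introduced'') precise rather than merely suggestive. The honest content is a conditional: the diagonal set $D$ is definable only by quantifying over the entire hierarchy, so any system that defines $D$ must reflect on $S^\infty$ as a completed object and thereby transcend the predicative, stage-by-stage constructivity assumed of each $\mathcal{F}_n$. I would therefore frame this direction carefully, stating that the existence of a nonconstructive witness $D$ shows that closing $S^\infty$ to $\mathcal{P}(\mathbb{N})$ requires a comprehension step not available within the constructive regime, while being explicit that a full converse (that \emph{some specific} nonconstructive axiom suffices and is necessary) is a separate claim that the diagonal argument alone does not establish.
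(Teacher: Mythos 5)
Your proof is correct and follows essentially the same route as the paper's own proof, which is only a three-line sketch: each \( \mathcal{F}_n \) yields countably many definable subsets, the countable union \( S^\infty \) is therefore countable, and since \( \mathcal{P}(\mathbb{N}) \) is uncountable the two cannot coincide. You are in fact more careful than the paper at every point where care is needed --- the dovetailed enumeration \( \langle n,k\rangle \) avoiding any appeal to countable choice, the explicit diagonal witness \( D \) rather than a bare citation of classical uncountability, and the honest framing of the vague ``unless non-constructive comprehension principles are introduced'' clause as something the diagonal argument motivates but does not formally establish.
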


\begin{proof}[Sketch]
By construction, each \( \mathcal{F}_n \) produces only countably many definable subsets of \( \mathbb{N} \). The countable union of countable definable sets is countable. Since \( \mathcal{P}(\mathbb{N}) \) is uncountable in classical cardinality, \( S^\infty \) is strictly smaller unless uncountable sets are assumed a priori.
\end{proof}

Thus, fractal countability offers a way to interpret the continuum not as a totality but as a definitional horizon — a layered closure that never captures the full classical object but yields increasingly expressive approximations within constructive bounds.

\subsection{From Object to Process: A Shift in Ontology}

This view aligns with intuitionistic and constructivist interpretations, where the continuum is not a set of completed entities but an unfolding process. Brouwer's notion of the continuum as a "medium of free becoming" resonates with this \cite{Brouwer1927}: real numbers are not discovered as completed values, but constructed through successively refined approximations.

Fractal countability gives this philosophical view a formal counterpart. The continuum is not a set but a stratified structure of definability, with each layer reflecting an extension of our formal language. This reframes uncountability: not as a question of size, but of \emph{expressive inaccessibility}.

\subsection{Beyond Representations: Constructive Internalization}

Unlike classical codings — which embed \( \mathbb{R} \) into sequences or trees — fractal countability internalizes the expansion process. The goal is not to simulate the continuum syntactically, but to analyze how far formal systems can reach in approximating it from within.

This has implications for how we understand the real line in constructive mathematics:

\begin{itemize}
    \item The continuum becomes system-relative: \( S^\infty \) depends on the choice of \( \mathcal{F}_0 \).
    \item Continuity becomes layered: each \( S_n \) adds expressivity but cannot collapse the hierarchy.
    \item Completeness is reinterpreted: rather than being a global assumption, it becomes a structural property of the extension process.
\end{itemize}

\subsection{Toward a New Constructive Foundation}

Fractal countability does not deny the utility of the continuum in classical mathematics. Rather, it proposes a refined approach: one where definability is not global but stratified, where power-set constructions are replaced by process-relative closures, and where mathematical objects grow with the expressive power of the systems in which they live.

In this view, the real line is not an already-formed set, but the evolving boundary of what can be said about it — an expanding but always countable trace of constructive thought.

This perspective opens the way to a foundational approach where:

\begin{itemize}
    \item Constructive systems are calibrated by their ability to approximate \( \mathbb{R} \) via \( S_n \);
    \item The expressive ceiling is visible and formally bounded;
    \item Classical structures emerge not by fiat but through stratified construction.
\end{itemize}

In the next section, we turn to the philosophical implications of this perspective, situating fractal countability within the broader landscape of realism, constructivism, and foundational pluralism.

\subsection{Examples of Stratified Continuum Approximations}

To better illustrate the constructive and hierarchical nature of fractal countability, we now present two concrete examples of how sequences \( \{ \mathcal{F}_n \} \) can be constructed to progressively approximate the continuum. These examples emphasize the role of syntactic extension in defining richer classes of subsets of \( \mathbb{N} \), while remaining within a countable framework.

\paragraph{Example 1: From Computable to Arithmetical Sets.}
Let the base system be \( \mathcal{F}_0 = \mathrm{RCA}_0 \), which captures recursive comprehension and defines only computable sets. We take \( \mathcal{F}_1 = \mathrm{ACA}_0 \), extending comprehension to all arithmetical formulas.

Then:
\begin{itemize}
    \item \( S_0 \) consists of all computable subsets of \( \mathbb{N} \).
    \item \( S_1 \) consists of all subsets of \( \mathbb{N} \) definable by arithmetical formulas, i.e., recursive in \( \emptyset' \).
\end{itemize}

The resulting closure \( S^\infty = S_1 \) if the chain stops, or it continues to grow if we extend further (e.g., \( \mathcal{F}_2 = \mathrm{ATR}_0 \), etc.). This matches the stratified development in reverse mathematics \cite{FriedmanSimpson1978}, with each step enriching definability while preserving countability. See Table~\ref{tab:fractal-approximation} for an overview of this approximation hierarchy.

\begin{table}[ht]
\centering
\renewcommand{\arraystretch}{1.4}
\resizebox{\textwidth}{!}{%
\begin{tabular}{@{} c c c p{8.2cm} @{}}
\toprule
\textbf{Level \(n\)} & \textbf{System \( \mathcal{F}_n \)} & \textbf{Set \( S_n \)} & \textbf{Description} \\
\midrule
0 & \( \mathrm{RCA}_0 \) & Computable sets & Recursive (decidable) subsets of \( \mathbb{N} \); basic computable mathematics. \\
1 & \( \mathrm{ACA}_0 \) & Arithmetical sets & Subsets definable by arithmetical formulas; recursive in the Turing jump \( \emptyset' \). \\
2 & \( \mathrm{ATR}_0 \) & Hyperarithmetical sets & Closure under arithmetical transfinite recursion; encodes well-founded inductive definitions. \\
3 & \( \mathcal{F}_3 \) (e.g., with bar recursion) & Beyond \( HYP \)? & Sets definable via higher-type recursion; may include functionals or trees beyond the hyperarithmetical. \\
\bottomrule
\end{tabular}%
}
\caption{Stratified approximation to \( \mathcal{P}(\mathbb{N}) \) via fractal countability}
\label{tab:fractal-approximation}
\end{table}

Figure~\ref{fig:stratified-example} shows a typical progression of \( \mathcal{F}_n \) from recursive to hyperarithmetical sets, corresponding to increasing comprehension axioms.

\begin{figure}[h]
\centering
\begin{tabular}{@{}ll@{}}
\( \mathcal{F}_0 = \mathrm{RCA}_0 \): & \( S_0 = \) computable sets \\
\( \mathcal{F}_1 = \mathrm{ACA}_0 \): & \( S_1 = \) arithmetical sets \\
\( \mathcal{F}_2 = \mathrm{ATR}_0 \): & \( S_2 = \) hyperarithmetical sets \\
\end{tabular}
\caption{Example of stratified continuum approximation under classical subsystems}
\label{fig:stratified-example}
\end{figure}

\paragraph{Example 2: Constructive Extensions via Bar Recursion.}
Let the base be Heyting arithmetic \( \mathcal{F}_0 = \mathrm{HA} \), and let \( \mathcal{F}_1 \) extend \( \mathrm{HA} \) with bar recursion of finite type up to level 1 — a principle often admitted in constructive type theory and systems such as Gödel's System T or proof assistants like Agda \cite{Troelstra1988}.

Then:
\begin{itemize}
    \item \( S_0 \) includes only primitive recursive and constructively provable sets.
    \item \( S_1 \) includes functionals definable with higher-type recursion — for example, solutions to totality conditions over trees or dependent choices over finite data.
\end{itemize}

In this setting, \( S^\infty \) may include sets not accessible through arithmetical comprehension alone. The definitional closure could even reach beyond the hyperarithmetical hierarchy, depending on the richness of the recursion principles used. See Table~\ref{tab:constructive-fractal-approximation} for a parallel stratification over constructive and type-theoretic foundations.

These examples demonstrate how fractal countability supports structurally enriched yet still countable approximations to the continuum. Each choice of \( \mathcal{F}_n \) encodes not just a logical strength, but a constructive epistemology — revealing how mathematics unfolds relative to its formal resources.

\begin{table}[ht]
\centering
\renewcommand{\arraystretch}{1.4}
\resizebox{\textwidth}{!}{%
\begin{tabular}{@{} c c c p{8.2cm} @{}}
\toprule
\textbf{Level \(n\)} & \textbf{System \( \mathcal{F}_n \)} & \textbf{Set \( S_n \)} & \textbf{Description} \\
\midrule
0 & \( \mathrm{HA} \) & Primitive recursive sets & Constructively provable sets under intuitionistic logic; limited to basic recursion. \\
1 & \( \mathrm{HA} + \text{Bar Recursion} \) & Bar-recursively definable sets & Sets defined via controlled dependent choice and tree-based recursion \cite{Troelstra1988}. \\
2 & \( \text{System } T \) & Functionals of finite type & Contains higher-type functionals; definable total functions over all finite types. \\
3 & \( \mathrm{MLTT} + \text{W-types} \) & Inductive families & Sets defined by dependent inductive types; includes structured infinite data. \\
\bottomrule
\end{tabular}%
}
\caption{Fractal definability hierarchy over constructive and type-theoretic systems}
\label{tab:constructive-fractal-approximation}
\end{table}

\section{Philosophical Implications}

The shift from classical to constructive views on the continuum marks not only a technical divergence but also a fundamental ontological and epistemological transformation. In this section, we reflect on how the notion of fractal countability realigns foundational perspectives on mathematical infinity, objecthood, and definability.

\subsection{From Completed Totalities to Constructive Horizons}

In classical set theory, the continuum \( \mathbb{R} \) is treated as a completed uncountable totality, grounded in the power set \( \mathcal{P}(\mathbb{N}) \). This view presupposes the existence of a vast, well-defined domain of subsets, most of which are not individually describable. The classical universe is closed under arbitrary comprehension, embracing the full consequences of the power set axiom.

By contrast, the notion of fractal countability replaces this static conception with a process-relative perspective. The continuum becomes a horizon: a constructively unreachable boundary toward which formal systems may progress via definitional extensions, but which can never be fully crossed. This mirrors Brouwer's vision of the continuum as a \emph{medium of free becoming} \cite{Brouwer1927} and gives it a precise meta-formal formulation.

\subsection{Stratification and Relative Ontology}

In the fractal framework, mathematical existence becomes stratified. There is no absolute domain of sets, but rather a hierarchy of definable stages \( S_n \), each indexed by a conservative extension \( \mathcal{F}_n \). This leads to a system-relative ontology\footnote{Unlike in ZFC, where \(\mathbb{R}\) is unique up to bijection, here \(\mathbb{R}_{\mathcal{F}_0}\) is determined by the base system and may vary accordingly.}: what exists depends on the expressive power of the system from which it is viewed. The totality of real numbers is no longer an object but an evolving structure of potential definability. This stratified reinterpretation of the continuum is visualized schematically in Figure~\ref{fig:fractal-continuum-diagram}.

\begin{figure}[ht]
\centering
\setlength{\unitlength}{1cm}
\begin{picture}(12,5.2)

\put(0.5,4.0){\framebox(11,1){\textbf{ZFC View:} \(\mathbb{R} = \mathcal{P}(\mathbb{N})\) as completed actual infinity}}
\put(6,3.9){\vector(0,-1){0.5}}

\put(0.5,2.0){\framebox(11,1.2){\textbf{Fractal View:} Stratified definability: \( \mathbb{R}_{\mathcal{F}_0} = \bigcup S_n \)}}
\put(6,1.9){\vector(0,-1){0.5}}

\put(0.5,0.2){\framebox(11,1){\textbf{Constructivist Limit:} No system \( \mathcal{F}_n \) can fully reach \(\mathbb{R}\)}}

\end{picture}
\caption{Philosophical trajectories of the continuum: from classical totality to stratified definability}
\label{fig:fractal-continuum-diagram}
\end{figure}
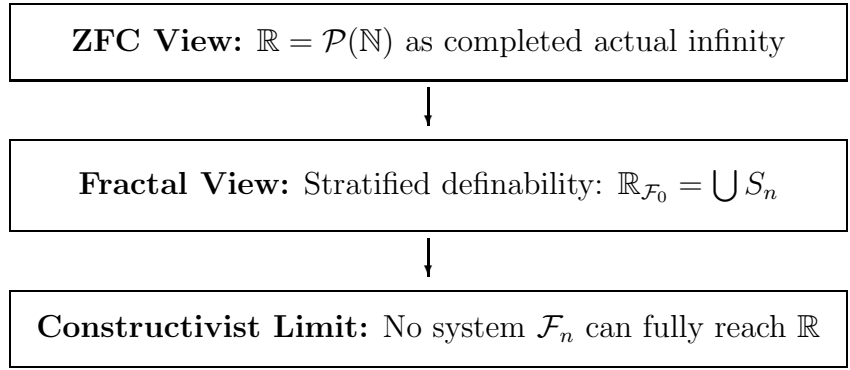

This perspective aligns with Feferman's notion of \emph{predicative mathematics}, where universes are constructed step-by-step in a stratified, ordinal-indexed fashion \cite{Feferman2005}. Our approach similarly builds the definable universe layer by layer, not by appeal to impredicative comprehension, but through cumulative definitional closure. It emphasizes the meta-mathematical visibility of each stage and the transparency of constructive growth.

\subsection{Bridging Formalism and Constructivism}

Fractal countability also mediates between formalist and constructivist traditions. From the formalist point of view, mathematics is the manipulation of symbols under rules. From the constructivist view, mathematical entities must be generated by effective means. The stratified definability framework honors both: it operates within formal systems but insists on constructible semantics.

Rather than reject the continuum as a classical fiction, we reinterpret it as a structured idealization: a formal projection of what could be constructed in principle. The continuum becomes a limit of definability progression\footnote{In computational settings, this limit may be approximated via proof assistant universes or stratified type hierarchies.}, not a completed set but a direction of formal unfolding. This brings clarity to debates about realism and anti-realism in mathematics, showing how nonconstructive ideals can be reconciled with system-relative epistemology.

\subsection{Implications for Foundational Pluralism}

Finally, the fractal approach supports foundational pluralism. Different base systems \( \mathcal{F}_0 \) yield different closure chains \( S_n \), leading to distinct yet internally coherent hierarchies. There is no privileged formal universe, only overlapping trajectories of expressibility.

This plurality is not a weakness but a resource: it allows for tailored formal systems adapted to specific epistemic goals. For example, a proof assistant may implement a version of \( \mathcal{F}_0 \) compatible with constructive type theory, while a classical analyst may prefer \( \mathrm{ACA}_0 \). Each yields its own notion of continuum approximation, valid within its own framework.

\subsection{Objections and Replies}

\paragraph{Classical Challenges.} A standard objection is that rejecting \( \mathcal{P}(\mathbb{N}) \) as a totality risks losing key results from analysis, such as the full definition of the Lebesgue measure on Borel sets. While true in the classical framework, fractal countability does not deny the value of such tools — it situates them as idealizations that may be approximated constructively in layered form. Many analytic structures, such as effectively compact spaces or constructively integrable functions, remain meaningful within specific \( \mathcal{F}_n \).

\paragraph{Constructivist Radicalism.} From a stricter constructivist standpoint, one might ask: why retain the language of "the continuum" at all, if it remains an inaccessible horizon? The answer is pragmatic and structural. Fractal countability acknowledges the classical ideal, but reframes it as a regulative structure for constructive expansion — much like how constructive analysis retains notions of limits, but reinterprets them in terms of verifiable convergence.

\subsection{Connections to Contemporary Metamathematics}

The stratified approach to definability resonates with recent developments in type theory and constructive foundations. In particular, Univalent Foundations and Homotopy Type Theory (HoTT) construct universes in stages, stratified by type levels and homotopical structure. These hierarchies parallel the \( S_n \) sequence, albeit with richer categorical semantics.

Moreover, in the spirit of Harvey Friedman's finitistic reductions, one might ask whether specific fragments of analysis can be reconstructed within a bounded \( S_n \). The fractal framework allows for such bounded representations, providing a middle ground between proof-theoretic conservativity and constructive executability.

\section{Future Directions and Conclusion}

The framework of fractal countability opens several lines of inquiry across logic, computability, and foundations of mathematics:

\begin{itemize}
  \item \textbf{Stratified complexity analysis:} Can one classify mathematical theorems based on the minimal level \( S_n \) in which their objects or functions appear? This would provide a fine-grained alternative to classical reverse mathematics.

  \item \textbf{Cross-foundational hierarchy comparisons:} Given two base systems \( \mathcal{F}_0 \) and \( \mathcal{F}'_0 \), can we meaningfully compare their resulting hierarchies of \( S_n \)? What transformations or embeddings preserve expressivity?

  \item \textbf{Constructive extensions beyond HYP:} Can fractal countability reach definable reals outside the hyperarithmetical hierarchy using extensions such as bar recursion, choice principles, or type universes?

  \item \textbf{Applications in formal systems:} How can stratified definability be reflected in proof assistants and type-theoretic languages such as Agda or Lean, where definability and totality are syntactically enforced?

  \item \textbf{Epistemic modeling:} Does a stratified view of definability offer better alignment with how knowledge and formalization develop in practice — for example, in scientific theories or constructive physics?
  
  \item \textbf{Applications in cryptography:} Can layered definability model access-control hierarchies or formalize stratified secrecy in provably secure systems?
\end{itemize}

Table~\ref{tab:research-directions} summarizes some of the main research avenues suggested by the stratified framework, highlighting how technical developments map onto philosophical questions.

\begin{table}[ht]
\centering
\renewcommand{\arraystretch}{1.4}
\resizebox{\textwidth}{!}{%
\begin{tabular}{@{} p{4.2cm} c p{8.2cm} @{}}
\toprule
\textbf{Direction} & \textbf{Type} & \textbf{Description} \\
\midrule
Stratified complexity & Technical & Classify theorems by the minimal \( S_n \) needed to express their content; provides a new lens on mathematical depth via definability. \\
\addlinespace
Hierarchy comparison & Meta-theoretical & Study translations between hierarchies induced by different base systems \( \mathcal{F}_0 \); supports pluralism of foundations. \\
\addlinespace
Beyond HYP & Logical & Explore whether extensions like bar recursion or dependent choice allow constructing reals outside the hyperarithmetical hierarchy. \\
\addlinespace
Integration with proof assistants & Applied & Incorporate stratified definability levels \( S_n \) into formal tools like Agda or Lean; calibrate expressive power against logical layers. \\
\addlinespace
Epistemic modeling & Philosophical & Use stratified closure to model knowledge growth, scientific theory evolution, and process-relative objectivity. \\
\addlinespace
Stratified secrecy & Security & Model multi-level access and verifiable information disclosure using definability layers; potential applications in formal cryptographic frameworks. \\
\bottomrule
\end{tabular}%
}
\caption{Stratified research directions connecting logic, foundations, and definability}
\label{tab:research-directions}
\end{table}

\paragraph{Conclusion.} Fractal countability offers a constructive, stratified alternative to the power set of \( \mathbb{N} \), preserving definability while avoiding the ontological leap of classical uncountability. It reframes the continuum not as a fixed set but as a growing horizon of expressibility. Through its emphasis on layered closure, meta-theoretic transparency, and system-relative semantics, it provides a foundation for pluralistic and computationally grounded mathematics. Further exploration may reveal its role as a unifying framework between constructive mathematics, reverse analysis, and proof-theoretic metascience.


\end{document}